\documentclass[a4paper,11pt]{article}
\usepackage{makeidx}
\usepackage[mathscr]{eucal}
\usepackage[dvips]{color}
\usepackage{amssymb, amsfonts, amsmath, amsthm, graphicx}
\usepackage[notcite, notref]{showkeys}
\usepackage{here}
\usepackage{tasks}
\usepackage{caption}
\captionsetup{position=below}
\voffset -2cm
\hoffset -1cm
\textheight 23cm
\textwidth 14.8cm
\usepackage{ esint }

\DeclareMathOperator{\sech}{sech}

\newtheorem{definition}{Definition}[section]
\newtheorem{proposition}{Proposition}[section]

\newtheorem{theorem}{Theorem}[section]

\newtheorem{lemma}[proposition]{Lemma}
\newtheorem{remark}{Remark}[section]

\newcommand{\eps}{\varepsilon}

\newcommand{\R}{\mathbb{R}}

\newcommand{\T}{\mathbb{T}}

\newcommand{\boO}{\mathcal{O}}

\numberwithin{equation}{section}

\author{}
\date{}
\pagestyle{plain}

\begin{document}
\title{Normal form for transverse instability of gZK equation for the line soliton with nearly critical speed.}
\author{Yakine Bahri and Hichem Hajaiej}

\maketitle
\begin{abstract}
In this paper, we study the transverse instability of generalized Zakharov-Kuznetsov equation for the line soliton with critical speed. We derive and justify a normal form reduction for a bifurcation problem of the stationary nonlinear KdV equation on the product space $\R \times \T$.
\end{abstract}
\section{Introduction}

We consider the generalized Zakharov--Kuznetsov (gZK) equation with a general power non-linearity
\begin{equation}\label{ZKk}
u_t +\partial_x ( u^{k+1}) + u_{xxx} + u_{xyy} = 0, \quad (x,y)\in \R\times \T,
\end{equation}
where $k\geq 2$.
It is well-known that the following quantities are conserved for smooth solutions of the ZK equation :
\begin{equation} \label{M}
M(u)=\int u ^2dxdy,
\end{equation}
and
\begin{equation} \label{H}
H(u)=\frac12 \int \big( |\nabla u|^2-\frac1{k+2}u^{k+2}\big)dxdy.
\end{equation}

The gZK equation admits a traveling wave solution. A traveling wave with speed $c$, is a particular solution of the form $u(t,x,y)=u_c(x-ct,y)$ where $u_c$ is a non-trivial solution to the following stationary equation,
\begin{equation}
\label{TW}
- \partial_{xx} u_c - \partial_{yy} u_c + c u_c
- u_c^{3} = 0 \qquad  \mbox{in} \quad  \mathbb{R} \times \mathbb{T}.
\end{equation}

The main part of our work covers the instability of the line soliton over the 2D flow. We recall the definition of the orbital stability.
\begin{definition}
We say that a solitary wave $Q(x-c t, y)$ is orbitally stable in $H^{1}\left(\mathbb{R} \times \mathbb{T}_{L}\right)$ if for any $\varepsilon>0$ there exists $\delta>0$ such that for all initial data $u_{0} \in H^{1}\left(\mathbb{R} \times \mathbb{T}_{L}\right)$ with $\left\|u_{0}-Q\right\|_{H^{1}}<\delta$, the solution $u(t)$ of $(1.1)$ with $u(0)=u_{0}$ exists globally in positive time and satisfies
$$
\sup _{t>0} \inf _{\left(x_{0}, y_{0}\right) \in \mathbb{R} \times \mathbb{T}_{L}}\left\|u(t, \cdot, \cdot)-Q\left(\cdot-x_{0}, \cdot-y_{0}\right)\right\|_{H^{1}}<\varepsilon
$$
Otherwise, we say the solitary wave $Q(x-c t, y)$ is orbitally unstable in $H^{1}\left(\mathbb{R} \times \mathbb{T}_{L}\right)$.
\end{definition}
In the multidimensional case, de Bouard \cite{deB} showed the orbital stability of positive solitary waves of the generalized Zakharov–Kuznetsov equation  while assuming the well-posedness on the energy space. For the two dimensional case, Côte, Muñoz, Pilod and Simpson \cite{Munoz} have proved the asymptotic stability of positive solitary waves and multi-solitary waves of the Zakharov–Kuznetsov equation using the argument of Martel and Merle \cite{MM1}. A numerical study for two-dimensional solitary wave interactions and the formation of singularities in the modified Zakharov–Kuznetsov (mZK) equation was considered in \cite{SB}. This is a generalization of the Korteweg–deVries equation discussed in \cite{Pel1}.

Note that, $u_c(x,y)=u_c(x)$ is also a solution in the energy space. In this case, $u_{c}$ is the unique positive solution to
\begin{equation} \label{line-sp}
- \partial_{xx} u_{c} + c u_{c} - u_{c}^{k+1} = 0 \qquad \mbox{in $\mathbb{R}$}.
\end{equation}
We recall that the explicit form of the solution is given by
\begin{equation}
\label{line-soliton}
u_{c}(x) :=
c^{\frac{1}{k}} \left(
\frac{k+2}{2}
\right)^{\frac{1}{k}}
\sech^{\frac{2}{k}}
\left(\frac{k}{2} \sqrt{c} \ x\right).
\end{equation}

The stability of this traveling wave with respect to the KdV flow (the one-dimensional version of ZK) has been fully described in the literature. The orbital stability was proven to occur only for $k<4$ (see \cite{Benjamin, pw2} for more details). In addition, Martel and Merle proved in \cite{MM1} the asymptotic stability in the energy space. The instability holds for any $k\geq 4$. We refer the reader to \cite{BSW} for $k>4$ and to \cite{MM2, MMR, MMR1, M2} for the $L^2$ critical case $k=4$. We conclude that the line traveling wave $u_c$ is unstable under the ZK flow perturbation for $k\geq 4$. For this, we restrict our study to $2\leq k<4$.

For $k=1$, Yamazki studied in \cite{YY} the orbital and asymptotic transverse stability. He proved that the results depend on the speed of the traveling wave. More precisely, he showed that only traveling waves with subcritical as well as critical speed are asymptotically stable, and are unstable with the super-critical speed under a $2$D perturbation. He used the method of Evans’ function and the argument of Rousset and Tzvetkov \cite{RT0}. The asymptotic stability for orbitally stable line solitary waves of Zakharov–Kuznetsov equation was proved by using the argument of Martel and Merle \cite{MM1} and a Liouville type theorem combined with a modified virial type estimate since the linearized operator of the stationary equation is degenerate. In \cite{YY2}, Yamazaki constructed center stable manifolds around unstable line solitary waves with critical speed. To recover the degeneracy of the linearized operator around line solitary waves with critical speed, the auther proved the stability condition of the center stable manifold for critical speed by applying to the estimate of the 4th order term of a Lyapunov function in \cite{YY} and \cite{YY1}. In addition, Pelinovsky proved in \cite{Pel} the same results using a normal form argument. The normal form was derived by means of
symplectic projections and near-identity transformations. The justification of this normal form is provided with an energy method. More precisely; the main result of the work was to derive and to justify the first-order differential equation
\begin{equation}
\label{n-form}
\frac{d b}{d t}=\lambda^{\prime}\left(c_{*}\right)\left(c-c_{*}\right) b+\gamma|b|^{2} b, \quad t>0
\end{equation}

where $\lambda^{\prime}\left(c_{*}\right)>0, \gamma<0$ are real-valued numerical coefficients, $c_{*}$ is the critical speed of the line soliton, $c \in \mathbb{R}$ depends on the initial conditions, and $b(t): \mathbb{R}_{+} \rightarrow \mathbb{C}$ is an amplitude of transverse perturbation. The above differential equation describes the nonlinear dynamics of a small transverse perturbation of a fixed period to the line soliton with a nearly critical speed $c_{*}$ and is referred to as "normal form for transverse instability of the line soliton with a nearly critical speed of propagation".

In this paper, our main goal is to derive the normal form \eqref{n-form} corresponding to \eqref{ZKk}. We will show that in our case that $\gamma >0$ which implies instability of the line soliton with a nearly critical speed $c_{*}$. In the Appendix, we will prove a conjecture stated by Pelinovsky in \cite{Pel} for the quadratic case ($k=1$).

\subsection{Preliminaries}
For the sake of simplicity, we will restrict our self to $k=2$ while tackling the transverse instability. Note that, our argument applies to any $k\in[2,4)$. Let us consider
\begin{equation}\label{ZK}
u_t +\partial_x ( u^{3}) + u_{xxx} + u_{xyy} = 0, \quad (x,y)\in \R\times \T,
\end{equation}
The one-dimensional linearized operator around the line soliton is given by
\begin{equation}
\label{def:Lc}
L_c := -\partial^2_{x} +  c - 3 u_c^2 .
\end{equation}
The Schr\"{o}dinger operator $L_c : H^2(\mathbb{R}) \to L^2(\mathbb{R})$ is known \cite{LL} to have the kernel spanned by $u'_c$, the essential
spectrum located on $[c,\infty)$ and a simple isolated negative eigenvalue
$$ \lambda = -3c,  \quad \psi = u_c^2.$$

Let us consider the $2\pi$-periodic transverse perturbation to the line solitons (\ref{line-soliton}).
The linearized solution around the soliton $u(x,t) = u_c(\xi) + U(\xi) e^{\lambda t + i k y}$ with $\xi:=x-ct$
and $k \in \mathbb{Z}$ provided the following spectral problem at the linear level
\begin{equation}
\label{lin-k}
\partial_{\xi} (L_c + k^2) U = \lambda U, \quad k \in \mathbb{Z},
\end{equation}
where $\partial_{\xi} L_c : H^3(\mathbb{R}) \to L^2(\mathbb{R})$ is the linearized operator for the gKdV equation.

We recall that the spectrum of
$\partial_{\xi} L_c : H^3(\mathbb{R}) \to L^2(\mathbb{R})$ consists of a double zero eigenvalue and a continuous spectrum on $i \mathbb{R}$ (see  \cite{pw1} for more details). The double zero eigenvalue is associated with the following Jordan block of the operator $\partial_{\xi} L$:
\begin{equation}
\label{Kernel-double}
\partial_{\xi} L_c \partial_{\xi} u_c = 0, \quad \partial_{\xi} L_c \partial_c u_c = - \partial_{\xi} u_c,
\end{equation}
where the derivatives of $u_c$ in $\xi$ and $c$ are exponentially decaying functions of $\xi$. The following lemma characterizes the spectral problem (\ref{lin-k}) for any $k \in \mathbb{N}$.
\begin{lemma}
For any $n \in \mathbb{N}$, the spectral problem (\ref{lin-k})
has a pair of real eigenvalues $\pm \lambda_k(c)$
if $c > c_n : = \frac{n^2}{3}$. No eigenvalues with ${\rm Re}(\lambda) \neq 0$
exist if $c \in (0,c_n)$.
\label{theorem-spectral}
\end{lemma}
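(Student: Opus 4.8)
The plan is to regard \eqref{lin-k} with transverse wavenumber $k=n$ as a Hamiltonian eigenvalue problem $\partial_{\xi} A_n U=\lambda U$, where $A_n:=L_c+n^2$ is self-adjoint and $\partial_{\xi}$ is skew-adjoint, and to read off the dichotomy from the Morse index of $A_n$. From the spectrum of $L_c$ recalled just after \eqref{def:Lc}, the spectrum of $A_n$ consists of the simple eigenvalue $-3c+n^2$ (eigenfunction $u_c^2$), the simple eigenvalue $n^2$ (eigenfunction $u_c'$), and essential spectrum $[c+n^2,\infty)$. Thus the lowest eigenvalue of $A_n$ vanishes exactly at $c=c_n=n^2/3$, is positive for $c<c_n$, and is negative for $c>c_n$; this sign change will drive the transition. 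Throughout, the essential spectrum of $\partial_{\xi}A_n$ is purely imaginary, since as $|\xi|\to\infty$ the operator has constant-coefficient symbol $i\zeta(\zeta^2+c+n^2)\in i\R$.

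For $c\in(0,c_n)$ I would prove nonexistence by a skew-adjoint factorization. Since all three pieces of the spectrum of $A_n$ are then strictly positive, $A_n>0$ and its positive self-adjoint square root $A_n^{1/2}$ is well defined and invertible. The substitution $W:=A_n^{1/2}U$ turns $\partial_{\xi}A_n U=\lambda U$ into the equivalent problem $SW=\lambda W$ with $S:=A_n^{1/2}\,\partial_{\xi}\,A_n^{1/2}$; indeed, applying $A_n^{1/2}$ to the eigenvalue equation and writing $A_n U=A_n^{1/2}W$ gives $A_n^{1/2}\partial_{\xi}A_n^{1/2}W=\lambda W$. As the conjugation of the skew-adjoint $\partial_{\xi}$ by a self-adjoint operator, $S$ is skew-adjoint, so $\sigma(S)\subset i\R$; since $U\mapsto A_n^{1/2}U$ is a similarity of the eigenvalue problem, the point spectrum of $\partial_{\xi}A_n$ is purely imaginary as well. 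Hence no eigenvalue with $\mathrm{Re}(\lambda)\neq0$ exists.

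For $c>c_n$ the factorization breaks down because $A_n$ now has exactly one negative eigenvalue, and I would instead produce the real pair from a Hamiltonian--Krein index count. Writing $\partial_{\xi}A_n=J A_n$ with $J=\partial_{\xi}$ skew-adjoint, the instability index identity reads $k_r+2k_c+2k_i^-=n(A_n)-n(D)$, where $k_r$ counts real positive eigenvalues, $k_c$ complex quadruplets, $k_i^-$ negative-Krein imaginary pairs, $n(A_n)=1$ is the Morse index, and $D$ is the symmetry-constraint matrix. The decisive point, distinguishing this case from the one-dimensional ($n=0$) problem \eqref{Kernel-double}, is that for $n\geq1$ the translation mode obeys $A_n u_c'=n^2 u_c'\neq0$; consequently $\ker(\partial_{\xi}A_n)=\{0\}$ and there are no constrained directions, so $D$ is trivial and $n(D)=0$. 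The identity then forces $k_r+2k_c+2k_i^-=1$, and since $2k_c+2k_i^-$ is even, $k_r$ must be odd; the only nonnegative integer solution is $k_r=1,\ k_c=k_i^-=0$. This yields exactly one real positive eigenvalue and, by the Hamiltonian symmetry $\lambda\mapsto-\lambda$, the pair $\pm\lambda_n(c)$.

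The main obstacle is making the index count rigorous in this unbounded setting with continuous spectrum: one must verify the hypotheses of the Hamiltonian--Krein theory, namely that $\partial_{\xi}A_n$ genuinely fits the $JA$ framework with $\partial_{\xi}$ admitting a suitable (possibly unbounded) inverse on the zero-mean range relevant to the eigenvalue problem, that no eigenvalue is embedded in or absorbed by the essential spectrum on $i\R$, and that the kernel and constraint dimensions are exactly as computed. I would supplement this with a Lyapunov--Schmidt reduction near the threshold $c=c_n$, where $u_c^2\in\ker(\partial_{\xi}A_n)$, to confirm that the pair $\pm\lambda_n(c)$ truly bifurcates off the imaginary axis onto the real axis: the leading coefficient in the resulting scalar relation $\lambda^2\sim C\,(c-c_n)$, computed against the crossing eigenfunction $u_c^2$, must be shown to be positive. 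A continuity and persistence argument then guarantees that, as $c$ increases beyond $c_n$, the real eigenvalue cannot return to the imaginary axis, since that would require passing through $\lambda=0$, which is excluded by $\ker(\partial_{\xi}A_n)=\{0\}$ for $c\neq c_n$.
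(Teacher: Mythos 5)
First, a point of comparison: the paper contains no proof of Lemma \ref{theorem-spectral} at all. It is recalled as a known result (it is the cubic-case analogue of Lemma 1 in \cite{Pel}, which in turn rests on the weighted-space Evans-function theory of \cite{pw1}), so there is no in-paper argument to measure yours against; I therefore assess your proposal on its own merits. Your first half is correct and is the standard argument: for $c \in (0,c_n)$ all three spectral pieces of $A_n = L_c + n^2$ are strictly positive, and either your symmetrization $W = A_n^{1/2}U$, or even more directly pairing $\partial_\xi A_n U = \lambda U$ against $A_n U$ and taking real parts, shows that every eigenvalue is purely imaginary. Modulo routine domain checks, this direction is complete.

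The genuine gaps are in the unstable direction $c > c_n$. (i) The Hamiltonian--Krein identity $k_r + 2k_c + 2k_i^- = n(A_n) - n(D)$ is invoked outside its hypotheses: $J = \partial_\xi$ has no bounded inverse on $L^2(\R)$, and, as you yourself observe, the essential spectrum of $\partial_\xi A_n$ is the \emph{entire} imaginary axis, so $\lambda = 0$ is embedded in it and there is no spectral gap; the standard index theorems simply do not apply in this setting, and adapting them is precisely the hard analytic content (this is why the weighted-space Evans function of \cite{pw1} exists). You name this obstacle but do not overcome it, so your main line of argument is incomplete. (ii) Your fallback is structurally wrong as described: a Lyapunov--Schmidt reduction at $\lambda = 0$, $c = c_n$ cannot be performed in $L^2(\R)$, because $\partial_\xi A_n$ is not Fredholm there ($0$ lies in the essential spectrum); one must pass to $L^2_\mu(\R)$, exactly as in Lemma \ref{theorem-bifurcation}. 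In that space the zero eigenvalue is \emph{algebraically simple} --- $\langle \eta_*, \psi_* \rangle_{L^2} = 8c_* \neq 0$ by (\ref{limiting-coefficient}) --- so it crosses the origin linearly, $\lambda(c) \approx 2\sqrt{c_*}\,(c - c_n)$ as in (\ref{lambda-expansion})--(\ref{derivative-eigenvalue}), and \emph{not} by the square-root law $\lambda^2 \sim C(c-c_n)$ you posit, which would correspond to a $2\times 2$ Jordan block. The real pair $\pm\lambda_n(c)$ then arises because an eigenvalue of the weighted problem with ${\rm Re}(\lambda) > 0$ has an exponentially decaying eigenfunction and hence is a genuine $L^2$ eigenvalue, while the reflection $\xi \mapsto -\xi$ (which does not preserve $L^2_\mu$, so the weighted operator has no $\lambda \mapsto -\lambda$ symmetry) produces the partner $-\lambda_n(c)$ in $L^2$. (iii) Your continuation step must exclude not only passage through $\lambda = 0$ but also absorption of the real eigenvalue into the essential spectrum (all of $i\R$) and escape to infinity; this again requires Evans-function analyticity in the weighted space and a priori bounds on unstable eigenvalues, not merely $\Ker(\partial_\xi A_n) = \{0\}$.
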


Note that, it is possible to show nonlinear orbital stability of
the line soliton with $c \in (0,c_*)$ and nonlinear instability of the line soliton using energy method with $c > c_*$, where
$$c_* := \min\limits_{n \in \mathbb{N}} c_n \equiv \frac{1}{3}.$$

In this work, we will consider the critical speed case i.e. $c = c_*$. We first study the spectral problem \eqref{lin-k} in the exponentially weighted space ( see \cite{pw1,pw2} for more details)
\begin{equation}
\label{exp-space}
H^s_{\mu}(\mathbb{R}) =
\left\{ u \in H^s_{\rm loc}(\mathbb{R}) : \quad e^{\mu \xi} u \in H^s(\mathbb{R}) \right\}, \quad s \geq 0, \quad \mu > 0.
\end{equation}

The following lemma reviews the spectral property of  \eqref{lin-k}.

\begin{lemma}
There is $\mu_0 > 0$ such that for every $\mu \in (0,\mu_0)$, the spectral problem (\ref{lin-k})
with $k = 1$ and $c = c_*$ considered in $L^2_{\mu}(\mathbb{R})$ admits a simple zero eigenvalue with the eigenfunction
$\psi_* \in H^3_{\mu}(\mathbb{R})$ and the adjoint eigenfunction $\eta_* \in H^3_{-\mu}(\mathbb{R})$, where
\begin{equation}
\label{limiting-function}
\psi_*(\xi) = 2c_* {\rm sech}^2(\sqrt{c_*} \xi), \quad
\eta_*(\xi) = 2c_* \int_{-\infty}^{\xi} {\rm sech}^2(\sqrt{c_*} \xi') d\xi',
\end{equation}
hence,
\begin{equation}
\label{limiting-coefficient}
\langle \eta_*, \psi_* \rangle_{L^2} = \frac{(2c_*)^2}{2} \left( \int_{\mathbb{R}} {\rm sech}^2(\sqrt{c_*} \xi) d \xi \right)^2
= 8 c_*.
\end{equation}
Moreover, for a given $\mu \in (0,\mu_0)$, there exists an interval $(c_-,c_+)$ with
 $c_- < c_* < c_+$ such that the spectral problem (\ref{lin-k})
with $k = 1$ and $c \in (c_-,c_+)$ considered in $L^2_{\mu}(\mathbb{R})$
admits a small eigenvalue $\lambda(c)$, where the mapping $c \mapsto \lambda$ is smooth and
is given by
\begin{equation}
\label{lambda-expansion}
\lambda(c) = \lambda'(c_*) (c-c_*) + \mathcal{O}((c-c_*)^2) \quad \mbox{\rm as} \quad c \to c_*,
\end{equation}
with
\begin{equation}
\label{derivative-eigenvalue}
\lambda'(c_*) = 2 \sqrt{c_*}.
\end{equation}
\label{theorem-bifurcation}
\end{lemma}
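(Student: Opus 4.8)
The plan is to analyze the spectral problem \eqref{lin-k} with $k=1$ at the critical speed $c=c_*$ in the exponentially weighted space $L^2_\mu(\mathbb{R})$, and then perturb in the parameter $c$. First I would verify the claimed limiting eigenfunction: at $c=c_*$ and $k=1$ the problem reads $\partial_\xi(L_{c_*}+1)\psi_* = 0$, which means $(L_{c_*}+1)\psi_*$ is a constant; since $c_*=1/3$ so that $c_*+1 = 4/3 = 4c_*$, a direct computation shows that $\psi_* = 2c_*\,\mathrm{sech}^2(\sqrt{c_*}\xi)$ satisfies $(L_{c_*}+1)\psi_* = 0$ identically (not merely up to a constant), using $L_{c_*} = -\partial_\xi^2 + c_* - 3u_{c_*}^2$ and the explicit soliton profile from \eqref{line-soliton}. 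The decay $\psi_*\sim e^{-2\sqrt{c_*}\xi}$ as $\xi\to+\infty$ shows $\psi_*\in H^3_\mu$ for $\mu<\mu_0:=2\sqrt{c_*}$, confirming it is a genuine eigenfunction in the weighted space even though it is not $L^2$-integrable against itself in the unweighted sense.

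Next I would identify the adjoint problem. Since $\partial_\xi(L_c+k^2)$ is not self-adjoint, the adjoint eigenfunction $\eta_*$ solves the formal adjoint equation $(L_{c_*}+1)\partial_\xi\eta_* = 0$ at zero eigenvalue, i.e. $(L_{c_*}+1)\eta_*' $ is constant; taking $\eta_*' = \psi_*/(2c_*)\cdot(\text{const})$ and integrating gives the antiderivative form \eqref{limiting-function}, with $\eta_*\in H^3_{-\mu}$ because $\eta_*$ tends to a nonzero constant as $\xi\to+\infty$ (so $e^{-\mu\xi}\eta_*\in H^3$). The pairing \eqref{limiting-coefficient} is then a routine integral: $\langle\eta_*,\psi_*\rangle = \int\psi_*\,\eta_*\,d\xi$, and integrating by parts moves the derivative so that the integral collapses to $\tfrac12(\int\psi_*\,d\xi)^2 = \tfrac12(2c_*\int\mathrm{sech}^2)^2 = 8c_*$, using $\int_{\mathbb{R}}\mathrm{sech}^2(\sqrt{c_*}\xi)\,d\xi = 2/\sqrt{c_*}$. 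The nonvanishing of this pairing is exactly the nondegeneracy that makes the zero eigenvalue simple and isolated in $L^2_\mu$.

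For the perturbation statement, the plan is a Lyapunov--Schmidt / analytic-perturbation argument. Writing the eigenvalue problem as $\mathcal{F}(\lambda,c,U) := \partial_\xi(L_c+1)U - \lambda U = 0$, I would treat $(\lambda,c)$ near $(0,c_*)$ and project onto the one-dimensional kernel spanned by $\psi_*$ using the adjoint $\eta_*$. Because the pairing $\langle\eta_*,\psi_*\rangle=8c_*\neq 0$, the reduced bifurcation function is well defined and the implicit function theorem in the weighted space yields a smooth branch $c\mapsto\lambda(c)$ with $\lambda(c_*)=0$. Differentiating the solvability condition $\langle\eta_*,\,\partial_c[(L_c+1)\psi_*]\rangle = \lambda'(c_*)\langle\eta_*,\psi_*\rangle$ at $c=c_*$ gives the first-order coefficient; here $\partial_c L_c = \mathrm{Id} - 3\partial_c(u_c^2)$, so $\partial_c[(L_c+1)\psi_*]\big|_{c_*}$ reduces to an explicit expression, and the scalar product with $\eta_*$ evaluates the numerator. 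Dividing by $8c_*$ should produce $\lambda'(c_*)=2\sqrt{c_*}$. The main obstacle I anticipate is the perturbation step rather than the algebra: one must justify that the simple zero eigenvalue in $L^2_\mu$ stays isolated from the essential spectrum (whose location shifts under the weight) uniformly for $c$ near $c_*$, so that the analytic continuation of a \emph{simple, isolated} eigenvalue is legitimate; this requires controlling the Fredholm properties of $\partial_\xi(L_c+1)-\lambda$ on $L^2_\mu$ and checking that $\mu<\mu_0$ keeps the bifurcating eigenvalue separated from the continuous spectrum as $c$ varies in $(c_-,c_+)$.
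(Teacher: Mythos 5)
The paper states Lemma \ref{theorem-bifurcation} without proof: it ``reviews'' material imported from Pelinovsky \cite{Pel} and the exponentially weighted framework of Pego--Weinstein \cite{pw1,pw2}. So your proposal can only be measured against that standard argument, and its architecture --- verify the explicit kernel pair, compute the pairing, then continue the simple isolated eigenvalue in $c$ --- is exactly that argument. The elementary parts are correct: $(L_{c_*}+1)\psi_*=0$ holds because $\psi_*=u_{c_*}^2$ is the eigenfunction of $L_{c_*}$ with eigenvalue $-3c_*=-1$ (recorded in the paper's preliminaries; your direct computation is equivalent), $\eta_*$ is the antiderivative of $\psi_*$, and $\langle \eta_*,\psi_*\rangle_{L^2}=\int_{\mathbb{R}}\eta_*\eta_*'\,d\xi=\tfrac12\bigl(\int_{\mathbb{R}}\psi_*\,d\xi\bigr)^2=8c_*$. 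One conceptual remark is wrong, though harmless: $\psi_*$ is perfectly square-integrable without any weight. What the weight buys is that the essential spectrum of $\partial_\xi(L_{c_*}+1)$, which fills all of $i\mathbb{R}$ on $L^2(\mathbb{R})$ (so $0$ is an embedded eigenvalue there), is pushed into the open left half-plane on $L^2_\mu$; it is $\eta_*$, which tends to the nonzero constant $4\sqrt{c_*}$ at $+\infty$, that fails to be in $L^2$ and must be housed in the dual space $L^2_{-\mu}$.

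Two steps need repair before the proposal actually establishes \eqref{derivative-eigenvalue}. First, your solvability condition is written as $\langle\eta_*,\partial_c[(L_c+1)\psi_*]\rangle=\lambda'(c_*)\langle\eta_*,\psi_*\rangle$, which has lost the outer $\partial_\xi$ of the operator you yourself put into $\mathcal{F}$; taken literally it produces the numerator $\langle\eta_*,(1-6u_{c_*}\partial_c u_{c_*})\psi_*\rangle$, which is not the right quantity and does not evaluate to $16(c_*)^{3/2}$. The correct condition is $\lambda'(c_*)\langle\eta_*,\psi_*\rangle=\langle\eta_*,\partial_\xi[(1-6u_{c_*}\partial_c u_{c_*})\psi_*]\rangle$, and the whole point of the antiderivative structure of $\eta_*$ is that one integration by parts (using $\eta_*'=\psi_*$, with vanishing boundary terms since the bracket decays) turns this into $-\langle\psi_*,(1-6u_{c_*}\partial_c u_{c_*})\psi_*\rangle=-\|\psi_*\|_{L^2}^2+6\langle\psi_*^2,u_{c_*}\partial_c u_{c_*}\rangle_{L^2}=-\tfrac{16}{3}(c_*)^{3/2}+\tfrac{64}{3}(c_*)^{3/2}=16(c_*)^{3/2}$, whence $\lambda'(c_*)=16(c_*)^{3/2}/(8c_*)=2\sqrt{c_*}$. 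You only assert this outcome (``should produce''), and with the misplaced derivative the computation would not come out. Second, the isolation/Fredholm step that you defer as ``the main obstacle'' is genuinely required but is a short computation here and should be done: conjugating by $e^{\mu\xi}$ and using Weyl's theorem (the potential decays exponentially), the essential spectrum of $\partial_\xi(L_c+1)$ on $L^2_\mu$ is the curve $\bigl\{(ip-\mu)\bigl(p^2+2i\mu p+c+1-\mu^2\bigr):p\in\mathbb{R}\bigr\}$, whose real part equals $-\mu(3p^2+c+1-\mu^2)\leq-\mu(c+1-\mu^2)<0$ whenever $0<\mu<\sqrt{c+1}$. Since $\sqrt{c_*+1}=2\sqrt{c_*}$, any $\mu_0<2\sqrt{c_*}$ simultaneously guarantees $\psi_*\in H^3_\mu$ and keeps the zero eigenvalue isolated, uniformly for $c$ in a neighborhood of $c_*$, which is what legitimizes the Lyapunov--Schmidt/analytic-perturbation step producing the smooth branch $\lambda(c)$.
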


\subsection{Instability result}
 We use a modulation argument to construct a translation parameter $a(t)$ and a speed parameter $c(t)$ of the line solitons as well as its perturbation $\tilde{u}(t)$ defined in $H^{1}(\mathbb{R} \times \mathbb{T}) \cap H_{\mu}^{1}(\mathbb{R} \times \mathbb{T})$ for every $t \in [0,T)$. This allows us to consider the traveling coordinate $\xi=x- a(t)$ and use the decomposition
$$u(x, y, t)=u_{c(t)}(\xi)+\tilde{u}(\xi, y, t), \quad \xi=x-4 a(t)
$$
The time evolution of the varying parameters $a(t)$ and $c(t)$ and the perturbation term $\tilde{u}(t)$ are to be found from the evolution problem
$$
\tilde{u}_{t}=\partial_{\xi}\left(L_{c}-\partial_{y}^{2}+4(\dot{a}-c)\right) \tilde{u}+4(\dot{a}-c) \partial_{\xi} u_{c}-\dot{c} \partial_{c} u_{c}-6 \partial_{\xi} \tilde{u}^{2}
$$
where the differential expression for $L_{c}$ is the linearized operator around the line soliton (see \eqref{def:Lc} for more details).

The following theorem represents the normal form for transverse instability of the line soliton with a nearly critical speed of propagation.

\begin{theorem}
 Consider the Cauchy problem for the evolution equation \eqref{ZKk} with
$$\tilde{u}(0) \in H^{1}(\mathbb{R} \times \mathbb{T}) \cap H_{\mu}^{1}(\mathbb{R} \times \mathbb{T}),
$$
where $\mu>0$ is sufficiently small. There exist $\varepsilon_{0}>0$ and $C_{0}>0$ such that if the initial data satisfy the bound
$$
\left\|\tilde{u}(0)-2 \varepsilon \cos (y) \psi_{*}\right\|_{H^{1}(\mathbb{R} \times \mathbb{T}) \cap H_{\mu}^{1}(\mathbb{R} \times \mathbb{T})}+\left|c(0)-c_{*}\right| \leqslant \varepsilon^{2},
$$
for some $\varepsilon \in\left(0, \varepsilon_{0}\right)$, then there exist unique functions $a, b, c \in C^{1}\left(\mathbb{R}_{+}\right),$ and the unique solution
$$
\tilde{u}(t) \in C\left(\mathbb{R}_{+} ; H^{1}(\mathbb{R} \times \mathbb{T}) \cap H_{\mu}^{1}(\mathbb{R} \times \mathbb{T})\right)
$$
of the evolution equation \eqref{ZKk} satisfying the bound
$$\left\|\tilde{u}(t)-\left(b(t) e^{i y}+\bar{b}(t) e^{-i y}\right) \psi_{*}\right\|_{H^{1}(\mathbb{R} \times \mathbb{T})}+\left|c(t)-c_{*}\right|+|\dot{a}(t)-c(t)| \leqslant C_{0} \varepsilon^{2}, \quad t \in \mathbb{R}_{+}.$$
Furthermore, the function $b(t)$ satisfies the normal form
$$
\dot{b}=\lambda^{\prime}\left(c_{*}\right)\left(c_{+}-c_{*}\right) b+\gamma|b|^{2} b, \quad t \in \mathbb{R}_{+},
$$
with $b(0)=\varepsilon$ and $c_{+} \in \mathbb{R}$ satisfying $\left|c_{+}-c_{*}\right| \leqslant C_{0} \varepsilon^{2}$, where $\lambda^{\prime}\left(c_{*}\right)>0$ is given by (2.13) and $\gamma>0$ is a specific numerical coefficient given by \eqref{gamma} below. Consequently, $|b(t)| >  \frac{C_{0}}{\varepsilon}$ for every $t \in \mathbb{R}_{+}$.

\end{theorem}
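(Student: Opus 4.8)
\emph{Proof proposal.} The plan is to combine a modulational decomposition with a symplectic projection onto the critical transverse mode and a second-order near-identity transformation, closing everything by an energy estimate. First I would fix the decomposition $u(x,y,t)=u_{c(t)}(\xi)+\tilde u(\xi,y,t)$, $\xi=x-a(t)$, by imposing two orthogonality conditions on $\tilde u$ against the generalized kernel of $\partial_\xi L_c$ exhibited in \eqref{Kernel-double}, namely against $\partial_\xi u_c$ and $\partial_c u_c$. The implicit function theorem then yields $C^1$ parameters $a(t),c(t)$ as long as $\tilde u$ stays small, and produces the evolution equation displayed just before the theorem. Expanding $\tilde u$ in the Fourier modes $e^{iny}$, the transverse operator $\partial_\xi(L_c+n^2)$ acts in each sector, and by Lemma~\ref{theorem-spectral} and Lemma~\ref{theorem-bifurcation} only the sectors $n=\pm 1$ carry the critical (nearly zero) eigenvalue at $c=c_*$. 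I would therefore define the amplitude through the symplectic projection onto $\psi_*$,
\[
b(t)=\frac{\langle \eta_*,\hat u_1(t)\rangle_{L^2}}{\langle \eta_*,\psi_*\rangle_{L^2}},\qquad \langle \eta_*,\psi_*\rangle_{L^2}=8c_*,
\]
where $\hat u_1$ is the first Fourier coefficient of $\tilde u$ and the denominator is the one computed in \eqref{limiting-coefficient}; the remainder $w:=\tilde u-(b\,e^{iy}+\bar b\,e^{-iy})\psi_*$ is, by construction, symplectically orthogonal to $\psi_*$ in the $n=\pm 1$ sectors and to the generalized kernel in the $n=0$ sector.

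Next I would derive the scalar normal form. Differentiating $b(t)$ in time and substituting the evolution equation, the projection of the linear part reproduces, via the smooth eigenvalue branch of Lemma~\ref{theorem-bifurcation}, the term $\lambda'(c_*)(c-c_*)b$ with $\lambda'(c_*)=2\sqrt{c_*}>0$. The cubic term is generated at second order: the quadratic nonlinearity $\partial_\xi\tilde u^2$ applied to $(b\,e^{iy}+\bar b\,e^{-iy})\psi_*$ feeds the non-critical sectors $n=0$ and $n=\pm 2$ with corrections of size $\mathcal{O}(\varepsilon^2)$, obtained by inverting the invertible operators $\partial_\xi L_c$ and $\partial_\xi(L_c+4)$ on those sectors; substituting these corrections back through $\partial_\xi\tilde u^2$ produces the resonant contribution $\gamma|b|^2b$ in the $n=1$ sector. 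Carrying out this computation gives the explicit constant $\gamma$ in \eqref{gamma}, and the decisive analytic point is to check its sign, $\gamma>0$, which is what separates the cubic case from Pelinovsky's quadratic computation and forces instability. In the same step one shows that $c(t)$ relaxes to a constant $c_+$ with $|c_+-c_*|\leq C_0\varepsilon^2$, so that the detuning appearing in the normal form is the fixed number $c_+-c_*$.

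The technical heart — and the step I expect to be the main obstacle — is the energy estimate keeping the remainder $w$ of size $\mathcal{O}(\varepsilon^2)$ on the relevant time interval. The degeneracy of $\partial_\xi L_c$ (a double zero eigenvalue with the Jordan block \eqref{Kernel-double}) makes a naive $L^2$ estimate fail, which is precisely why the problem is posed in the exponentially weighted spaces $H^s_\mu(\R\times\T)$ of \eqref{exp-space}: passing to the weight $e^{\mu\xi}$ moves the neutral continuous spectrum into the left half-plane and turns the embedded zero resonance into the genuinely simple eigenvalue isolated in Lemma~\ref{theorem-bifurcation}. I would build a Lyapunov functional from the conserved quantities $M$ in \eqref{M} and $H$ in \eqref{H} restricted to the symplectically orthogonal complement, use the coercivity of $L_c-\partial_y^2$ there — its only negative direction $u_c^2$ and kernel $u_c'$ being neutralized in the $n=0$ sector by the orthogonality conditions and by the modulation of $c$, while the critical $n=\pm 1$ direction is removed by the projection defining $b$ — and absorb the quadratic and cubic interactions by a Gronwall argument to propagate the bootstrap assumption $\|w\|_{H^1\cap H^1_\mu}\lesssim\varepsilon^2$. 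Finally, analyzing the scalar equation $\dot b=\lambda'(c_*)(c_+-c_*)b+\gamma|b|^2b$ with $b(0)=\varepsilon$ and $\gamma>0$ shows that $|b(t)|$ increases monotonically, so the perturbation leaves every $\mathcal{O}(\varepsilon)$ tube around the soliton family; this is the quantitative instability statement asserted at the end of the theorem.
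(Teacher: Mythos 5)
Your overall architecture (primary modulation decomposition, projection of the $n=\pm1$ mode with $\eta_*$, near-identity transformation generating $w_0,w_2$, truncation to a cubic ODE) is the same as the paper's, but two of your steps are genuinely wrong or missing. First, the orthogonality conditions defining the modulation: you impose orthogonality of $\tilde u$ against the generalized kernel of $\partial_\xi L_c$ itself, i.e.\ against $\partial_\xi u_c$ and $\partial_c u_c$ from \eqref{Kernel-double}. The paper instead imposes the symplectic conditions \eqref{symplectic-orthogonality}, $\langle u_c,\tilde u\rangle_{L^2}=\langle \partial_\xi^{-1}\partial_c u_c,\tilde u\rangle_{L^2}=0$, i.e.\ orthogonality to the generalized kernel of the \emph{adjoint} operator $-L_c\partial_\xi$. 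This is not cosmetic: your closing energy estimate needs coercivity of $L_c-\partial_y^2$ on the constraint set, and the Weinstein-type argument in the $n=0$ sector controls the negative direction of $L_c$ through the identity $L_c\partial_c u_c=-u_c$, hence requires orthogonality to $u_c$, not to $\partial_c u_c$; with your conditions the quadratic form can remain negative on the constraint set and the bootstrap does not close. Relatedly, your claim that the $n=0$ corrections are obtained "by inverting the invertible operator $\partial_\xi L_c$" is false — the $n=0$ sector is exactly where the Jordan block \eqref{Kernel-double} lives. In the paper the $n=0$ quadratic correction splits into $|b|^2 w_0$ solving \eqref{inhom-eq-1} (solvable on even functions, where $L_{c_*}$ has trivial kernel) plus a secular component along $\partial_c u_{c_*}$ that cannot be inverted away and must be absorbed into the modulation of $h$ and $\delta$, cf.\ \eqref{normal-form-1a}, \eqref{decomposition-h-dot} and \eqref{decomposition-last}.

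Second, the detuning constant and the coefficient $\gamma$. You assert that "$c(t)$ relaxes to a constant $c_+$"; it does not, and no relaxation mechanism exists here. The paper obtains the constant from a conservation law: expanding the momentum $Q(u)$ along the decomposition gives \eqref{expansion-Q}--\eqref{normal-form-1e}, i.e.\ $\delta(t)=\delta_0-\tfrac{56}{27}|b(t)|^2+\mathcal{O}(\delta_0^2+|b|^4)$ with $\delta_0=c_+-c_*$ fixed by the initial data. This step is quantitatively essential: substituting \eqref{normal-form-1e} into \eqref{normal-form-1-equiv} converts part of the $b\delta$ term into an additional $|b|^2b$ term, so the cubic coefficient in \eqref{normal-form-2} is \emph{not} the one produced solely by the resonant feedback of the $n=0,\pm2$ corrections that your outline computes; omitting the momentum feedback yields the wrong $\gamma$ and a priori the wrong sign. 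Finally, the sign $\gamma>0$, which you correctly call the decisive point but leave unproved, follows in the paper from writing $w_2=\tfrac32 (L_{c_*}+4)^{-1}(u_{c_*}\psi_*^2)$ via \eqref{inhom-eq-2} and using the strict positivity of $L_{c_*}+4$ to conclude $\langle \psi_*^2 u_{c_*}, w_2\rangle_{L^2}>0$, together with explicit evaluation of the remaining constants in \eqref{gamma}; some argument of this type must be supplied, since this inequality is what distinguishes the cubic case from Pelinovsky's quadratic case ($\gamma<0$).
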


\section{Transverse Instability}

\subsection{Transversely modulated solitary waves}

In this section, we construct a solution to \eqref{TW} which bifurcates from the line soliton with critical speed. We define the space of even functions
both in $\xi$ and $y$:
\begin{equation}
\label{even-space}
H^s_{\rm even} = \left\{ u \in H^s(\mathbb{R} \times \mathbb{T}) : \quad u(-\xi,y) = u(\xi,y) = u(\xi,-y) \right\}, \quad s \geq 0.
\end{equation}
The following lemma describes this bifurcation.

\begin{lemma}
\label{lemma-modulated-wave}
There exists $c_+ > c_*$ such that for every $c \in (c_*,c_+)$,
the nonlinear elliptic problem (\ref{TW}) has a nontrivial solution $u_b$ in $H^2_{\rm even}$
in addition to the line soliton (\ref{line-soliton}). The solution $u_b$ is expressed by
the expansion
\begin{equation}
\label{LS-decomposition}
u_b(\xi,y) = u_{c_*}(\xi) +  b \cos(y) \psi_*(\xi) + \tilde{u}_b(\xi,y),
\end{equation}
where $b \in \mathbb{R}$ is a nonzero root of the algebraic equation
\begin{equation}
\label{normal-form-static}
\alpha (c-c_*) b + \beta b^3 = 0
\end{equation}
and $\tilde{u}_b \in H^2_{\rm even}$ satisfies the bound $\| \tilde{u}_b \|_{H^2} \leq A b^2$
for a positive constant $A$ independently of $b$ and $c$. Here
$$
\alpha = 8 (c_*)^{\frac{3}{2}} > 0
$$
and $\beta < 0$ is a numerical coefficient given by (\ref{gamma-static}) below.
\end{lemma}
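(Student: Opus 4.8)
The plan is to read \eqref{TW} as a bifurcation problem branching off the line soliton $u_{c_*}$ at the critical speed and to resolve it by a Lyapunov--Schmidt reduction in $H^2_{\rm even}$. Setting $u = u_{c_*} + w$ and using that $u_{c_*}$ solves the one-dimensional profile equation, \eqref{TW} is equivalent to
\begin{equation}
\label{plan-perturbed}
F(w,c) := \mathcal{L}\, w + (c-c_*)(u_{c_*} + w) - 3 u_{c_*} w^2 - w^3 = 0, \qquad \mathcal{L} := L_{c_*} - \partial_y^2 ,
\end{equation}
with $L_{c_*}$ as in \eqref{def:Lc}. The operator $\mathcal{L}$ is self-adjoint on $L^2(\R\times\T)$; decomposing in the cosine modes $\cos(ny)$ and invoking the spectral description of $L_{c_*}$, I would note that $L_{c_*}+n^2$ is invertible on even-in-$\xi$ functions for $n=0$ (the zero mode $u_{c_*}'$ is odd, hence absent in $H^2_{\rm even}$) and for $n \geq 2$ (its bottom eigenvalue is $n^2 - 3c_* = n^2 - 1 > 0$), whereas for $n=1$ the identities $\psi_* = u_{c_*}^2$ and $L_{c_*}\psi_* = -3c_*\psi_* = -\psi_*$ give $(L_{c_*}+1)\psi_* = 0$. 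Thus $\Ker \mathcal{L}$ in $H^2_{\rm even}$ is exactly one-dimensional, spanned by $\cos(y)\psi_*$, which is the single degenerate direction the reduction must handle.

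Let $P$ be the $L^2$-orthogonal projection onto $X_0 := \Span\{\cos(y)\psi_*\}$ and $Q = I-P$ onto its complement $X_1$ in $H^2_{\rm even}$, and write $w = b\cos(y)\psi_* + \tilde u$ with $b \in \R$ and $\tilde u \in X_1$. Applying $Q$ to \eqref{plan-perturbed}, the principal part $Q\mathcal{L}Q$ is boundedly invertible on $X_1$, so since the nonlinearity is smooth I would solve for $\tilde u = \Phi(b, c-c_*) \in X_1$ by the implicit function theorem, with $\Phi(0,0)=0$. Differentiating the range equation shows $\partial_b\Phi(0,0)=0$ while $\partial_{(c-c_*)}\Phi(0,0) = -L_{c_*}^{-1}u_{c_*} = \partial_c u_c|_{c_*}$ (the infinitesimal speed adjustment of the profile), so that $\Phi = (c-c_*)\,\partial_c u_c|_{c_*} + O\big(b^2 + (c-c_*)^2 + |b(c-c_*)|\big)$. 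The genuinely new, quadratic-in-$b$ correction is obtained by inverting $\mathcal{L}$ against the quadratic nonlinearity $3u_{c_*}(\cos(y)\psi_*)^2 = \tfrac32 u_{c_*}\psi_*^2(1+\cos(2y))$, i.e. by solving $L_{c_*} v_0 = \tfrac32 u_{c_*}\psi_*^2$ on the $n=0$ mode and $(L_{c_*}+4)v_2 = \tfrac32 u_{c_*}\psi_*^2$ on the $n=2$ mode, both inversions being legitimate by the non-degeneracy above.

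Substituting $\tilde u = \Phi(b,c-c_*)$ into the kernel projection of \eqref{plan-perturbed} produces the scalar reduced equation $g(b,c-c_*) := \langle \cos(y)\psi_*,\, F(w,c)\rangle = 0$ with $w = b\cos(y)\psi_* + \Phi(b,c-c_*)$. Because $\mathcal{L}$ is self-adjoint with $\cos(y)\psi_* \in \Ker\mathcal{L}$, the linear term drops out of this projection. The reflection $y \mapsto y+\pi$ preserves $H^2_{\rm even}$ and \eqref{TW} while sending $b \mapsto -b$, hence $g$ is odd in $b$ and only odd powers survive: $g(b,c-c_*) = b\big(\alpha (c-c_*) + \beta b^2\big) + \text{h.o.t.}$, which is exactly \eqref{normal-form-static} after dividing by $b\neq 0$. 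Here $\alpha$ is the coefficient of the bilinear term $b(c-c_*)$, receiving a direct contribution from $(c-c_*)\,b\cos(y)\psi_*$ (a positive multiple of $\int_\R \psi_*^2\,d\xi$) together with a feedback contribution routed through the $(c-c_*)$-part of $\Phi$; evaluating the resulting $\sech$-integrals gives $\alpha = 8 c_*^{3/2} > 0$. The coefficient $\beta$ of $b^3$ similarly combines the direct cubic term $-\langle\cos(y)\psi_*,(\cos(y)\psi_*)^3\rangle$ with the feedback $-6\langle \cos(y)\psi_*,\, u_{c_*}\cos(y)\psi_*\,(v_0 + v_2\cos(2y))\rangle$ from the quadratic correction. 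Solving $b^2 = -\tfrac{\alpha}{\beta}(c-c_*)$ then forces $c-c_* > 0$ precisely because $\alpha>0$ and $\beta<0$, yielding the one-sided branch $c\in(c_*,c_+)$; and since $|c-c_*| = O(b^2)$ along it, the remainder obeys $\|\tilde u_b\|_{H^2} \le A b^2$.

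The main obstacle is the evaluation of $\beta$ and, above all, the proof that $\beta < 0$. This forces one to write the quadratic correction explicitly, using the explicit solvability of the $\sech$-potential Schr\"odinger operators $L_{c_*}$ and $L_{c_*}+4$, and then to compute the weighted integrals of powers of $\sech(\sqrt{c_*}\,\xi)$ that enter both the direct cubic term and the feedback term. The delicate point is that these two contributions must be summed with their correct signs and shown to be strictly negative: the direct cubic term is manifestly negative, so the real work is to control the feedback term produced by $v_0$ and $v_2$ and to verify it does not overturn this sign. Everything else --- boundedness of $(Q\mathcal{L}Q)^{-1}$, the contraction/implicit-function step, smoothness of the branch in $(b,c)$, and the $H^2$ bound on $\tilde u_b$ --- is routine once the one-dimensionality of $\Ker\mathcal{L}$ and the explicit spectral picture of $L_{c_*}$ are in hand.
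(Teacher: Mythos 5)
Your skeleton coincides with the paper's: a Lyapunov--Schmidt reduction in $H^2_{\rm even}$ around the one-dimensional kernel spanned by $\cos(y)\psi_*$, the same two correctors (your $v_0,v_2$ are exactly the paper's $w_0,w_2$ solving \eqref{inhom-eq-1}--\eqref{inhom-eq-2}), the same appearance of $\partial_c u_{c_*}$ at order $c-c_*$, and the same odd-in-$b$ reduced equation. The genuine gap is in the only place where the lemma has real content: the signs of $\alpha$ and $\beta$. Your sign bookkeeping is inconsistent with your own conventions, and your roadmap for proving $\beta<0$ points in the wrong direction. On the first point: with your reduced function $g=\langle \cos(y)\psi_*, F(w,c)\rangle$ and your $F$, the coefficient of $b(c-c_*)$ is proportional to $\|\psi_*\|_{L^2}^2-6\langle \psi_*^2, u_{c_*}\partial_c u_{c_*}\rangle_{L^2}$; since $\|\psi_*\|_{L^2}^2=\tfrac{16}{3}(c_*)^{3/2}$ while $6\langle \psi_*^2, u_{c_*}\partial_c u_{c_*}\rangle_{L^2}=\tfrac{64}{3}(c_*)^{3/2}$, this is negative, not $+8(c_*)^{3/2}$. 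Flipping both $\alpha$ and $\beta$ leaves \eqref{normal-form-static} unchanged, so this alone is harmless, but it already shows that your heuristic ``direct term sets the sign, feedback is a correction'' is false: in \emph{both} coefficients it is the feedback routed through the corrector that dominates and determines the sign (this is visible in \eqref{alpha-static}, where the two contributions enter with opposite signs and the $\partial_c u_{c_*}$ term is four times larger).

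On the decisive point, $\beta<0$: you claim the direct cubic term is ``manifestly'' of the desired sign and that the real work is to verify the feedback from $v_0,v_2$ ``does not overturn this sign.'' The truth is the opposite, so a proof organized around that claim would fail. In the paper's normalization \eqref{gamma-static},
\begin{equation*}
\beta=\tfrac{3}{2}\langle \psi_*^2 u_{c_*}, w_0\rangle_{L^2}+\tfrac{3}{4}\langle \psi_*^2 u_{c_*}, w_2\rangle_{L^2}+\tfrac{3}{8}\|\psi_*\|_{L^4}^4 ,
\end{equation*}
where the direct cubic term is \emph{positive} and the $w_2$-feedback is also positive (because $(L_{c_*}+4)^{-1}$ is positive definite); the sign $\beta<0$ comes entirely from the zero-mode feedback. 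Concretely, the paper solves \eqref{inhom-eq-1} explicitly, $w_0=\tfrac12\left(u_{c_*}^3-4c_*u_{c_*}\right)$, computes $\tfrac32\langle \psi_*^2u_{c_*},w_0\rangle_{L^2}=-\tfrac{1536}{105}(c_*)^{7/2}$, and this \emph{overturns} the direct term $\tfrac38\|\psi_*\|_{L^4}^4=\tfrac{576}{105}(c_*)^{7/2}$. Moreover, no explicit formula for $w_2$ is available or used: the adverse positive $w_2$-term is controlled through the operator bound $\|(L_{c_*}+4)^{-1}\|\le \tfrac13$ (valid since $L_{c_*}\ge -3c_*=-1$), and one then checks the $w_0$-term still dominates the total. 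So what remains after your reduction is not routine sign-checking but the computation that carries the lemma, and as you have set it up --- wrong target for which term dominates, and reliance on ``explicit solvability'' of $L_{c_*}+4$ that the argument does not actually provide --- it would not go through.
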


\begin{proof}
The proof is similar to the one of Lemma 1.3 in \cite{Pel}
 and relies on the method of Lyapunov--Schmidt reduction \cite{Nirenberg}.
We denote by $v_1(\xi,y) := \cos(y) \psi_*(\xi)$ the eigenfunction of the kernel of $L_{c_*} - \partial_y^2 : H^2_{\rm even} \to L^2_{\rm even}$. Let $c = c_* + \delta$ with $\delta \in \mathbb{R}$ being
sufficiently small. We plug the decomposition (\ref{LS-decomposition}) in \eqref{TW}, to obtain the projection equations
of the Lyapunov--Schmidt reduction method:
\begin{equation}
\label{reduction-1}
(L_{c_*} - \partial_y^2 +  \delta) \tilde{u}_b = \Pi \tilde{F},
\end{equation}
with
\begin{equation}
\label{expression-F}
\tilde{F} := -  \delta u_{c_*} -  \delta b v_1 + ( b v_1 + \tilde{u}_b)^3 + 3 u_{c_*} ( b v_1 + \tilde{u}_b)^2
\end{equation}
and $\Pi$ is an orthogonal projection operator in $L^2(\mathbb{R} \times \mathbb{T})$ on the subspace spanned by $v_1$.
This means that
$$
\frac{1}{2\pi} \langle v_1, \tilde{F} \rangle_{L^2(\mathbb{R} \times \mathbb{T})} = 0,
$$
which yields
\begin{equation}
\label{reduction-2} -  \frac{\delta}{2} b \| \psi_* \|^2_{L^2(\mathbb{R})} +
\frac{3}{8}b^3 \| \psi_* \|^4_{L^4(\mathbb{R})} + \frac{3}{\pi} b \langle v_1^2, u_{c_*} \tilde{u}_b \rangle_{L^2(\mathbb{R} \times \mathbb{T})} + \boO(b^5) = 0.
\end{equation}
Since the kernel of the linear operator
$L_{c_*} - \partial_y^2 : H^2_{\rm even} \to L^2_{\rm even}$ is one-dimensional, we know that there are $B > 0$ and $\delta_0 > 0$ such that
\begin{equation}
\label{inverse-operator}
\| \Pi (L_{c_*} - \partial_y^2 + 4 \delta)^{-1} \Pi \|_{L^2_{\rm even} \to L^2_{\rm even}} \leq B, \quad \forall |\delta| < \delta_0.
\end{equation}
Thus, for every small $b \in \mathbb{R}$ and small $\delta \in \mathbb{R}$,
we apply the fixed-point argument to
solve equation (\ref{reduction-1}) with (\ref{expression-F}) in $H^2_{\rm even}$
and to obtain a unique $\tilde{u}_b \in H^2_{\rm even}$ satisfying the bound
\begin{equation}
\label{reduction-2a}
\| \tilde{u}_b \|_{H^2} \leq A (|\delta| + b^2),
\end{equation}
where the positive constant $A$ is independent of $\delta$ and $b$. Now, we shall derive the algebraic equation (\ref{normal-form-static}). To do so, we perform a near-identity transformation
\begin{equation}
\label{v-component}
\tilde{u}_b(\xi,y) =  b^2 \cos(2y) w_2(\xi) + b^2 w_0(\xi) + \delta \partial_c u_{c_*}(\xi) + \tilde{w}_b(\xi,y),
\end{equation}
where
$w_0$ and $w_2$ are given by
\begin{equation}
\label{inhom-eq-1}
L_{c_*} w_0 = \frac{3}{2}  u_{c_*} \psi_*^2
\end{equation}
and
\begin{equation}
\label{inhom-eq-2}
(L_{c_*} + 4) w_2 = \frac{3}{2}  u_{c_*} \psi_*^2.
\end{equation}
On the other hand, $\tilde{w}_b$ satisfies the transformed equation
\begin{eqnarray}
\label{reduction-3}
(L_{c_*} - \partial_y^2 +  \delta) \tilde{w}_b = \Pi \tilde{G},
\end{eqnarray}
with
\begin{eqnarray*}
\tilde{G} :=  -  \delta^2 \partial_c u_{c_*}-  \delta b^2 \cos(2y) w_2 -  \delta b^2 w_0-  \delta b v_1 + 6 b u_{c_*}v_1 \tilde{u}_b + 3 u_{c_*}\tilde{u}_b^2++ ( b v_1 + \tilde{u}_b)^3 
\end{eqnarray*}
for $\tilde{u}_b$ is defined by (\ref{v-component}). Similarly to the above competition, for every small $b \in \mathbb{R}$ and $\delta \in \mathbb{R}$,
there exists a unique solution $\tilde{w} \in H^2_{\rm even}$ of equation (\ref{reduction-3}) satisfying the bound
\begin{equation}
\label{reduction-3a}
\| \tilde{w} \|_{H^2} \leq A (\delta^2 + |\delta| |b| + |b|^3),
\end{equation}
where the positive constant $A$ is independent of $\delta$ and $b$.

Substituting the near-identity transformation (\ref{v-component}) into the bifurcation equation
(\ref{reduction-2}) and using the bound (\ref{reduction-3a})
for the component $\tilde{w} \in H^2_{\rm even}$, we rewrite
(\ref{reduction-2}) in the equivalent form
\begin{equation}
\label{reduction-4}
\alpha \delta b + \beta b^3 + \mathcal{O}(\delta b^2,b^4) = 0,
\end{equation}
where we have introduced numerical coefficients $\alpha$ and $\beta$ as follows:
\begin{equation}
\label{alpha-static}
2\alpha :=  -\| \psi_* \|^2_{L^2} + 6 \langle \psi_*^2,u_{c_*} \partial_c u_{c_*} \rangle_{L^2}
\end{equation}
and
\begin{equation}
\label{gamma-static}
\beta := \frac{3}{4} \langle \psi_*^2u_{c_*}, 2w_0 + w_2 \rangle_{L^2}+\frac{3}{8} \| \psi_* \|^4_{L^4(\mathbb{R})} .
\end{equation}

Plugging (\ref{line-soliton}) and (\ref{limiting-function}) into \eqref{alpha-static}, we get
$$\alpha = -\frac{1}{2} \langle \psi_*, L_{c_*}' \psi_* \rangle_{L^2}   = 8 (c_*)^{\frac{3}{2}},$$
where $L_{c_*}':= 1-6u_{c_*} \partial_c u_{c_*}$. On the other hand, we are not able to provide an explicit form to $\beta$. This is why we will determine a bound to it. We first compute $w_0$ using \eqref{inhom-eq-1}. More precisely, we have
\begin{equation}
\label{explicit-solution-1}
w_0(\xi) =\frac{1}{2} \left( u_{c_*}^3- 4 c_* u_{c_*}\right).
\end{equation}
For $w_2$, since $L_{c_*} + 4 : H^2(\mathbb{R}) \to L^2(\mathbb{R})$ is strictly positive, we just write equation (\ref{inhom-eq-2}) as $w_2= \frac{3}{2} (L_{c_*} + 4)^{-1} ( u_{c_*} \psi_*^2).$ . Thus, using the bound $\|(L_{c_*} + 4)^{-1} \| \leq \frac{1}{3}$, we obtain
\begin{equation}
\label{sign:beta}
\beta \leq \frac{3}{2} \langle \psi_*^2 u_{c_*}, w_0\rangle_{L^2}+\frac{3}{8} \| \psi_* \|^4_{L^4(\mathbb{R})}  +\frac{1}{4} \|\psi_*^2 u_{c_*}\|^2_{L^2} = \frac{2^{6}}{315}(c_*)^{\frac{7}{2}} \left(-45+\frac{32}{3}\right) <0.
\end{equation}
Here, we have used
\begin{align*}
\|\psi_*\|^4_{L^4}=\frac{2^9}{35} (c_*)^\frac{7}{2}, \quad  \langle \psi_*^2u_{c_*}, w_0 \rangle_{L^2}=-\frac{2^{10}}{105} (c_*)^\frac{7}{2} \quad  \|\psi_*^2 u_{c_*}\|^2_{L^2} =\frac{2^{13}}{315} (c_*)^\frac{9}{2}.
\end{align*}
This finishes the proof of this lemma.
\end{proof}

\subsection{Derivation and justification of the normal form}

In this section, we derive and justify the normal form. We follow the steps of the proof in \cite{Pel}.

\subsubsection{Modulation equations for parameters $a$ and $c$}


We start by stating the modulation theory in the following lemma.

\begin{lemma}
\label{lem-decomposition}
 Let $T>0$. There exists $\eps_0 > 0$, $\mu_0 > 0$, and $C_0 > 0$ such that if
$u \in C([0,T),H^1(\mathbb{R} \times \mathbb{T}) \cap H^1_{\mu}(\mathbb{R} \times \mathbb{T}))$
with $\mu \in (0,\mu_0)$ is a solution to the ZK equation (\ref{ZK}) satisfying
\begin{equation}
\label{orbit-bound}
\eps := \inf_{a \in \mathbb{R}} \| u(x+a,y,t) - u_{c_*}(x) \|_{H^1(\mathbb{R} \times \mathbb{T}) \cap H^1_{\mu}(\mathbb{R} \times \mathbb{T})}
\leq \eps_0, \quad t \in \mathbb{R}_+,
\end{equation}
then there exist $a, c \in C([0,T))$ and $\tilde{u} \in
C([0,T),H^1(\mathbb{R} \times \mathbb{T}) \cap H^1_{\mu}(\mathbb{R} \times \mathbb{T}))$
such that the decomposition
\begin{equation}
\label{decomposition-lemma}
u(x,y,t) = u_{c(t)}(\xi) + \tilde{u}(\xi,y,t), \quad \xi = x -  a(t)
\end{equation}
holds with $\tilde{u}(t) \in [X_{c(t)}^*]^{\perp}$ for every $t \in \mathbb{R}_+$, where
\begin{equation}
\label{symplectic-orthogonality}
[X_{c(t)}^*]^{\perp} = \left\{ \tilde{u} \in L^2_{\mu}(\mathbb{R} \times \mathbb{T}) : \quad
\langle u_{c(t)}, \tilde{u} \rangle_{L^2(\mathbb{R} \times \mathbb{T})} =
\langle \partial_{\xi}^{-1} \partial_c u_{c(t)}, \tilde{u} \rangle_{L^2(\mathbb{R} \times \mathbb{T})} = 0
\right\}.
\end{equation}

\end{lemma}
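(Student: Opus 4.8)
The plan is to fix the modulation parameters $a(t)$ and $c(t)$ pointwise in $t$ by the implicit function theorem and then to promote this to continuity in $t$ and uniqueness on all of $[0,T)$. For a fixed time $t$, writing $v_a := u(\cdot + a, \cdot, t)$, I introduce the two scalar functionals
\[
F_1(a,c) := \langle u_c, v_a - u_c \rangle_{L^2(\R \times \T)}, \qquad
F_2(a,c) := \langle \partial_\xi^{-1} \partial_c u_c, v_a - u_c \rangle_{L^2(\R \times \T)},
\]
so that imposing $\tilde{u}(t) = v_a - u_c \in [X_{c}^*]^{\perp}$ is precisely the system $F_1 = F_2 = 0$. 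The profiles $u_c$, $\partial_c u_c$, and $\partial_\xi^{-1} \partial_c u_c$ all decay exponentially in $\xi$ — the last because $\partial_c u_c$ has zero mean over $\R$, so $\partial_\xi^{-1}\partial_c u_c$ vanishes at both ends — whence these pairings are well defined against perturbations in $L^2_\mu$ as long as $\mu < \sqrt{c_*}$; this pins down $\mu_0$.

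Next I compute the Jacobian of $(F_1,F_2)$ in $(a,c)$ at the base point $u = u_{c_*}$, $(a,c) = (0,c_*)$, where $\tilde{u} = 0$ and $F_1 = F_2 = 0$. Using $\partial_a v_a = \partial_\xi v_a$ and $\partial_c \tilde{u} = -\partial_c u_c$, parity (odd times even) gives $\partial_a F_1 = \langle u_{c_*}, \partial_\xi u_{c_*}\rangle = 0$, while integrating by parts and noting that $\partial_\xi\big[(\partial_\xi^{-1}\partial_c u_{c_*})^2\big]$ is a total derivative gives $\partial_c F_2 = -\langle \partial_\xi^{-1}\partial_c u_{c_*}, \partial_c u_{c_*}\rangle = 0$. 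The two off-diagonal entries both equal $-\langle u_{c_*}, \partial_c u_{c_*}\rangle = -\tfrac12 \partial_c\|u_c\|^2_{L^2}\big|_{c_*}$, which is strictly negative since $\|u_c\|^2_{L^2(\R\times\T)}$ is a positive multiple of $\sqrt{c}$ and hence increasing in $c$. Thus the Jacobian is anti-diagonal with nonzero off-diagonal entries, its determinant is nonzero, and the implicit function theorem applies.

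The implicit function theorem then yields, for every $u$ in a small $H^1 \cap H^1_\mu$ neighborhood of $u_{c_*}$, unique parameters $(a,c)$ near $(0,c_*)$ depending smoothly on $u$ and solving $F_1 = F_2 = 0$; the smallness radius fixes $\eps_0$. The orbital bound (\ref{orbit-bound}) ensures that for each $t$ a translate of $u(t)$ lies in this neighborhood, and by translation invariance I may feed the translated $u(\cdot,\cdot,t)$ into the solution map. Continuity of $t \mapsto u(t)$ in $H^1 \cap H^1_\mu$ together with the smoothness of the solution map delivers $a, c \in C([0,T))$, and the smooth dependence of $u_c$ on $c$ gives $\tilde{u} \in C([0,T); H^1 \cap H^1_\mu)$ lying in $[X^*_{c(t)}]^{\perp}$.

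The main obstacle is not the non-degeneracy of the Jacobian, which is the short parity and integration-by-parts computation above, but rather gluing the local, pointwise-in-$t$ construction into a single continuous branch $(a(t),c(t))$ on all of $[0,T)$. Since the infimum in (\ref{orbit-bound}) is attained at a $t$-dependent translate that could a priori jump, I would run a continuation argument: the set of times carrying a continuous branch along which $|c(t)-c_*|$ and $\|\tilde{u}(t)\|_{H^1 \cap H^1_\mu}$ stay small is both open and closed in $[0,T)$ — openness from the local implicit function theorem, closedness from the uniform smallness $\eps \le \eps_0$ — with the local uniqueness ruling out branch switching. The weighted-space constraint $\mu < \sqrt{c_*}$ must moreover be maintained uniformly as $c(t)$ ranges over the small interval about $c_*$.
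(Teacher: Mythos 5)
Your proposal is correct and follows essentially the same route the paper relies on: the paper states this modulation lemma without giving its own proof, deferring to the argument of Pelinovsky \cite{Pel}, which is precisely the implicit-function-theorem computation you carry out — the anti-diagonal Jacobian with off-diagonal entries $-\langle u_{c_*}, \partial_c u_{c_*}\rangle_{L^2(\mathbb{R}\times\mathbb{T})} = -2\pi P'(c_*) \neq 0$, followed by a continuation-in-time argument to glue the local branches. Your use of $M'(c)=0$ (so that $\partial_\xi^{-1}\partial_c u_{c}$ decays at both ends and the pairings are well defined) is consistent with the paper's own observation that $M'(c)=0$ for the cubic nonlinearity.
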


Note that the modulation parameters $a,c \in C^1(\mathbb{R}_+)$
satisfy the following system
\begin{eqnarray}
S \left[ \begin{array}{c} \dot{c} \\  \dot{a} - c \end{array} \right]
= \frac{1}{2\pi}  \left[ \begin{array}{c} \langle \partial_c u_c, \tilde{u}^3+3u_c\tilde{u}^2 \rangle_{L^2(\mathbb{R} \times \mathbb{T})} \\
\langle \partial_{\xi} u_c, \tilde{u}^3+3u_c\tilde{u}^2 \rangle_{L^2(\mathbb{R} \times \mathbb{T})}
\end{array} \right]\label{evolution-a-c}
\end{eqnarray}
with the coefficient matrix
\begin{equation}
\label{coefficeint-S}
S := \left[ \begin{array}{cc}
\frac{1}{2} (M'(c))^2 - \frac{1}{2\pi} \langle \partial_{\xi}^{-1} \partial^2_c u_c, \tilde{u} \rangle_{L^2(\mathbb{R} \times \mathbb{T})} &
P'(c) + \frac{1}{2\pi} \langle \partial_c u_c, \tilde{u} \rangle_{L^2(\mathbb{R} \times \mathbb{T})} \\
P'(c) - \frac{1}{2\pi} \langle \partial_c u_c, \tilde{u} \rangle_{L^2(\mathbb{R} \times \mathbb{T})} &
\frac{1}{2\pi} \langle \partial_{\xi} u_c, \tilde{u} \rangle_{L^2(\mathbb{R} \times \mathbb{T})} \end{array} \right],
\end{equation}
where $M(c) = \int_{\mathbb{R}} u_c(\xi) d\xi$ and $P(c) = \frac{1}{2} \int_{\mathbb{R}} u_c^2(\xi) d \xi$.
From the expression (\ref{line-soliton}), we obtain $M'(c) = 0$ and $P'(c) =1/\sqrt{c}$.

\subsubsection{A secondary decomposition for $c = c_*$}

By Lemma \ref{theorem-bifurcation}, if $c = c_* = \frac{1}{3}$ and $\mu > 0$ is sufficiently small,
then $Y_{c_*} = {\rm span}\{ \psi_* \}$ is an invariant subspace of $L^2_{\mu}(\mathbb{R})$ for
the simple zero eigenvalue of the linearized operator
$\partial_{\xi} (L_{c_*} + 1) : H^3_{\mu}(\mathbb{R}) \to L^2_{\mu}(\mathbb{R})$.
Similarly,  $Y_{c_*}^* = {\rm span}\{ \eta_* \}$
is an invariant subspace of $L^2_{-\mu}(\mathbb{R})$ for the simple zero eigenvalue of the adjoint
operator $-(L_{c_*} + 1)\partial_{\xi} : H^3_{-\mu}(\mathbb{R}) \to L^2_{-\mu}(\mathbb{R})$.
Let us point out the double degeneracy of the Fourier harmonics $e^{iy}$ and $e^{-iy}$
when general transverse perturbations are considered.

The following lemma states the secondary decomposition of the solution $\tilde{u}$
defined in the primary decomposition (\ref{decomposition-lemma}).

\begin{lemma}
\label{lem-decomposition-secondary}
Under the assumptions of Lemma \ref{lem-decomposition}, let
$\tilde{u} \in C(\mathbb{R}_+,H^1(\mathbb{R} \times \mathbb{T}) \cap H^1_{\mu}(\mathbb{R} \times \mathbb{T}))$
be given by the decomposition (\ref{decomposition-lemma}) and (\ref{symplectic-orthogonality}).
There exist $b \in C(\mathbb{R}_+)$ and $v \in
C(\mathbb{R}_+,H^1(\mathbb{R} \times \mathbb{T}) \cap H^1_{\mu}(\mathbb{R} \times \mathbb{T}))$
such that the decomposition
\begin{equation}
\label{decomposition-secondary}
\tilde{u}(\xi,y,t) = \left( b(t) e^{iy} + \bar{b}(t) e^{-iy} \right) \psi_*(\xi) + v(\xi,y,t),
\end{equation}
holds with $v(t) \in [Y_{c(t)}^*]^{\perp}$ for every $t \in \mathbb{R}_+$, where
\begin{equation}
\label{symplectic-orthogonality-single}
[Y_{c(t)}^*]^{\perp} = \left\{ v \in [X_{c(t)}^*]^{\perp} : \quad
\langle \eta_* e^{iy}, v \rangle_{L^2(\mathbb{R} \times \mathbb{T})}
= \langle \eta_* e^{-iy}, v \rangle_{L^2(\mathbb{R} \times \mathbb{T})} = 0
\right\}.
\end{equation}
\end{lemma}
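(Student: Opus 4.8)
The plan is to treat the secondary decomposition as an \emph{explicit bounded linear projection}, in contrast with the primary decomposition of Lemma \ref{lem-decomposition}, where the nonlinear dependence of $u_{c(t)}$ on the parameters $(a,c)$ forces an implicit-function argument. Since the amplitude $b$ enters the ansatz \eqref{decomposition-secondary} linearly, I expect that no fixed-point step is needed at all: the entire content is to define $b(t)$ by projection and to verify that the two adjoint orthogonality conditions in \eqref{symplectic-orthogonality-single} can be solved uniquely. We may follow the corresponding step in \cite{Pel}, of which this is the linear analogue.

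First I would observe that the critical mode $\bigl(b e^{iy}+\bar{b} e^{-iy}\bigr)\psi_*(\xi)$ automatically lies in $[X_{c(t)}^*]^{\perp}$. Indeed, both constraints defining $[X_{c(t)}^*]^{\perp}$ in \eqref{symplectic-orthogonality} pair $\tilde{u}$ against functions of $\xi$ only (namely $u_{c(t)}$ and $\partial_\xi^{-1}\partial_c u_{c(t)}$), so integrating the factor $e^{\pm iy}$ over $\T$ annihilates the critical mode. Consequently $v := \tilde{u} - \bigl(b e^{iy}+\bar{b} e^{-iy}\bigr)\psi_*$ inherits membership in $[X_{c(t)}^*]^{\perp}$ from $\tilde{u}$ for \emph{any} choice of $b$, and it only remains to fix $b$ so that the two extra conditions $\langle \eta_* e^{\pm iy}, v \rangle_{L^2(\R \times \T)}=0$ hold.

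Next I would impose these conditions. Testing \eqref{decomposition-secondary} against $\eta_* e^{iy}$ and $\eta_* e^{-iy}$ and using $\int_{\T} e^{-iy}\bigl(b e^{iy}+\bar{b} e^{-iy}\bigr)\,dy = 2\pi b$ together with the orthogonality of the harmonics, the $2\times 2$ system decouples and reduces to $\langle \eta_*,\psi_*\rangle_{L^2(\R)}\, b = \tfrac{1}{2\pi}\langle \eta_* e^{iy},\tilde{u}\rangle_{L^2(\R \times \T)}$ together with its complex conjugate. By the nondegeneracy \eqref{limiting-coefficient}, $\langle \eta_*,\psi_*\rangle_{L^2}=8c_*\neq 0$, so this determines $b$ uniquely, namely $b(t)=\tfrac{1}{16\pi c_*}\langle \eta_* e^{iy},\tilde{u}(t)\rangle_{L^2(\R \times \T)}$; since $\tilde{u}$ is real and $\eta_*$ is real, the second equation is the conjugate of the first and is satisfied automatically. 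This yields existence and uniqueness of $b$, after which $v$ is defined by difference and lies in $[Y_{c(t)}^*]^{\perp}$ by construction.

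Finally, for the regularity claims I would argue that $t\mapsto b(t)$ is continuous because $\tilde{u}\in C(\R_+,H^1(\R\times\T)\cap H^1_{\mu}(\R\times\T))$ and $\tilde{u}\mapsto \langle \eta_* e^{iy},\tilde{u}\rangle$ is a bounded linear functional, whence $v\in C(\R_+,H^1(\R\times\T)\cap H^1_{\mu}(\R\times\T))$ as a difference of continuous maps. The one genuine point to be careful about, and what I expect to be the main obstacle, is precisely the boundedness of this functional: the adjoint eigenfunction $\eta_*$ is only bounded and does \emph{not} decay as $\xi\to+\infty$ (it is a primitive of $\sech^2$), so the pairing is not controlled in the unweighted $L^2$. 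I would resolve this through the weighted duality, writing $\langle \eta_* e^{iy},\tilde{u}\rangle=\int (e^{-\mu\xi}\eta_*)(e^{\mu\xi}\tilde{u})\,e^{iy}\,d\xi\,dy$ and applying Cauchy--Schwarz; this is finite exactly because $\eta_*\in H^3_{-\mu}(\R)$ and $\tilde{u}\in H^1_{\mu}(\R\times\T)$, which is where the exponential weight $\mu>0$ from \eqref{exp-space} is essential.
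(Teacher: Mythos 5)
Your proposal is correct, and in fact the paper gives no proof of this lemma at all: it is stated bare, with the section only remarking that it ``follows the steps of the proof in \cite{Pel}.'' Your argument --- observing that the critical mode $\bigl(b e^{iy}+\bar{b}e^{-iy}\bigr)\psi_*$ is automatically annihilated by the $\xi$-only constraints of \eqref{symplectic-orthogonality}, defining $b(t)=\tfrac{1}{16\pi c_*}\langle \eta_* e^{iy},\tilde{u}(t)\rangle$ by projection against the adjoint eigenfunction using the nondegeneracy $\langle \eta_*,\psi_*\rangle_{L^2}=8c_*$ from \eqref{limiting-coefficient}, and controlling the pairing through the weighted duality between $H^3_{-\mu}$ and $H^1_{\mu}$ since $\eta_*$ does not decay as $\xi\to+\infty$ --- is exactly the standard spectral-projection argument the cited reference uses, so it legitimately fills the gap the paper leaves.
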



We decompose the modulation parameters as
\begin{equation}
\label{decomposition-a-c}
a(t) = \int_0^t c(t') dt' + h(t), \quad c(t) = c_* + \delta(t).
\end{equation}
We denote $L_c = L_{c_*} + \Delta L_c$,
with $\| \Delta L_c \|_{L^{\infty}} \leq A |c-c_*|$
for $|c-c_*|$ sufficiently small and $A$ is a positive constant independent of $c$.

The correction term $v$ in the decomposition (\ref{decomposition-secondary}) satisfies the time evolution equation
\begin{eqnarray}
\nonumber
v_t & = & \partial_{\xi} (L_{c_*} - \partial_y^2 +  \dot{h} + \Delta L_c) v
+  \dot{h} \partial_{\xi} u_{c_*+\delta} - \dot{\delta} \partial_c u_{c_*+\delta} - (\dot{b}e^{iy} + \dot{\bar{b}} e^{-iy}) \psi_* \\
& \phantom{t} &
+ \partial_{\xi} \left(  \dot{h} + \Delta L_c \right) (be^{iy} + \bar{b} e^{-iy}) \psi_*\nonumber\\
\label{amplitude-evolution-1}
& \phantom{t} &
-  \partial_{\xi} \left( \left( (b e^{iy} + \bar{b} e^{-iy} ) \psi_* + v \right)^3+3 u_c \left( (b e^{iy} + \bar{b} e^{-iy} ) \psi_* + v \right)^2\right).
\end{eqnarray}


For $b \in C^1(\mathbb{R}_+)$ the modulation equation is given by:
\begin{align}
\nonumber\dot{b} &\langle \eta_*, \psi_* \rangle_{L^2(\mathbb{R})} +
b \langle \psi_*, (\dot{h} + \Delta L_c) \psi_* \rangle_{L^2(\mathbb{R})} +
\frac{1}{2\pi} \langle \psi_* e^{iy}, \Delta L_c v \rangle_{L^2(\mathbb{R} \times \mathbb{T})}\\
& = \frac{1}{2\pi}  \langle \psi_* e^{iy}, \left[ (b e^{iy} + \bar{b} e^{-iy} ) \psi_* + v \right]^3+3u_c\left[ (b e^{iy} + \bar{b} e^{-iy} ) \psi_* + v \right]^2 \rangle_{L^2(\mathbb{R} \times \mathbb{T})}.
\label{evolution-b}
\end{align}
On the other hand, plugging (\ref{decomposition-secondary}) and (\ref{decomposition-a-c})
into the system (\ref{evolution-a-c}), we write the equations for $h$ and $\delta$:
\begin{align}
S \left[ \begin{array}{c} \dot{\delta} \\  \dot{h} \end{array} \right]
= \frac{1}{2\pi}
\left[ \begin{array}{c} \langle \partial_c u_c, \left[ (b e^{iy} + \bar{b} e^{-iy} ) \psi_* + v \right]^3+3u_c\left[ (b e^{iy} + \bar{b} e^{-iy} ) \psi_* + v \right]^2 \rangle_{L^2(\mathbb{R} \times \mathbb{T})} \\
\langle \partial_{\xi} u_c, \left[ (b e^{iy} + \bar{b} e^{-iy} ) \psi_* + v \right]^3+3u_c\left[ (b e^{iy} + \bar{b} e^{-iy} ) \psi_* + v \right]^2 \rangle_{L^2(\mathbb{R} \times \mathbb{T})}
\end{array} \right],\label{evolution-ac}
\end{align}
where $S$ in (\ref{coefficeint-S}) is written as
\begin{equation}
\label{coefficeint-S-S}
S := \left[ \begin{array}{cc}
 - \frac{1}{2\pi} \langle \partial_{\xi}^{-1} \partial^2_c u_c, v \rangle_{L^2(\mathbb{R} \times \mathbb{T})} &
P'(c) + \frac{1}{2\pi} \langle \partial_c u_c, v \rangle_{L^2(\mathbb{R} \times \mathbb{T})} \\
P'(c) - \frac{1}{2\pi} \langle \partial_c u_c, v \rangle_{L^2(\mathbb{R} \times \mathbb{T})} &
\frac{1}{2\pi} \langle \partial_{\xi} u_c, v \rangle_{L^2(\mathbb{R} \times \mathbb{T})} \end{array} \right].
\end{equation}

\subsubsection{Near-identity transformations}

%
Now, we write the order $\mathcal{O}(|b|^2)$ in the correction term $v$ in the decomposition (\ref{decomposition-secondary}) as for \eqref{v-component}:
\begin{equation}
\label{decomposition-tertiary}
v(\xi,y,t) = \left( b(t)^2 e^{2iy} + \bar{b}(t)^2 e^{-2iy} \right) w_2(\xi) +
|b(t)|^2 w_0(\xi) + w(\xi,y,t),
\end{equation}
where $w_0$ and $w_2$ are the same solutions of the linear inhomogeneous equations (\ref{inhom-eq-1})
and (\ref{inhom-eq-2}), whereas $w$ satisfies the transformed evolution equation
\begin{align}
\label{amplitude-evolution-2}
w_t  = & \partial_{\xi} (L_{c_*} - \partial_y^2 +  \dot{h} + \Delta L_c) w
+  \dot{h} \partial_{\xi} u_{c_*+\delta} - \dot{\delta} \partial_c u_{c_*+\delta}  \\
\nonumber &
- (\dot{b}e^{iy} + \dot{\bar{b}} e^{-iy}) \psi_* - (2 b \dot{b} e^{2iy} + 2 \bar{b} \dot{\bar{b}} e^{-2iy}) w_2 - (\bar{b} \dot{b} + b \dot{\bar{b}}) w_0 \\
\nonumber &
+ \partial_{\xi} \left(  \dot{h} + \Delta L_c \right)
\left[ (be^{iy} + \bar{b} e^{-iy}) \psi_* + ( b^2 e^{2iy} + \bar{b}^2 e^{-2iy}) w_2 +
|b|^2 w_0 \right] \\
\nonumber & -  \partial_{\xi} \left( \left( (b e^{iy} + \bar{b} e^{-iy} ) \psi_* + v \right)^3+3 u_c \left( (b e^{iy} + \bar{b} e^{-iy} ) \psi_* + v \right)^2\right).
\end{align}
On the other hand, we need to write the expansion of $\dot{h}$ with respect to $b$. This is given by the first equation in the system (\ref{evolution-ac}) which implies that
\begin{equation}
\label{normal-form-1a}
P'(c_*) \dot{h} = 6 |b|^2 \langle u_{c_*} \partial_c u_{c_*}, \psi_*^2 \rangle_{L^2} + \mathcal{O}(|b|^4)
= \frac{64}{3}(c_*)^\frac{3}{2} |b|^2 + \mathcal{O}(|b|^4).
\end{equation}
Since $P'(c_*) = 1/\sqrt{c_*}$ and $c_* = \frac{1}{3}$, we obtain
\begin{equation}
\label{decomposition-h-dot}
\dot{h} = \frac{64}{27} |b|^2 + \mathcal{O}(|b|^4)
\end{equation}
and
\begin{equation}
\label{decomposition-last}
w(\xi,y,t) = \frac{64}{27} |b|^2 \partial_{c} u_{c_*}(\xi) + \tilde{w}(\xi,y,t),
\end{equation}
where $\tilde{w}$ satisfied a transformed evolution equation without the $\mathcal{O}(|b|^2)$ terms
in the right-hand side of (\ref{amplitude-evolution-2}).
Substituting (\ref{decomposition-tertiary}), (\ref{decomposition-h-dot}), and (\ref{decomposition-last})
into the modulation equation (\ref{evolution-b}) yields to:
\begin{eqnarray}
\nonumber
\dot{b} \langle \eta_*, \psi_* \rangle_{L^2} & = &
6 |b|^2 b \langle \psi_*^2u_{c_*}, w_0 + w_2 \rangle_{L^2} +
\frac{128}{9} |b|^2 b \langle \psi_*^2, u_{c_*} \partial_c u_{c_*} \rangle_{L^2} + 3b|b|^2 \|\psi_*\|^4_{L^4} \\
\label{normal-form-1}
& \phantom{t} & - \frac{64}{27} b|b|^2 \|\psi_*\|^2_{L^2}- b \delta \langle \psi_*, L_{c_*}'  \psi_* \rangle_{L^2}
+ \mathcal{O}(\delta^2 |b| + |b|^5),
\end{eqnarray}
where $L_{c_*}':= 1-6u_{c_*} \partial_c u_{c_*}$
 and we have used $\Delta L_c = L_{c_*}' \delta + \mathcal{O}(\delta^2)$
for $\delta = c - c_*$. Thus, the equation (\ref{normal-form-1}) takes the form
\begin{equation}
\label{normal-form-1-equiv}
\dot{b} \langle \eta_*, \psi_* \rangle_{L^2}  = \frac{73}{105} 2^{9}(c_*)^\frac{7}{2} |b|^2 b
+ 6|b|^2 b \langle \psi_*^2, w_2 \rangle_{L^2} +
16 (c_*)^\frac{3}{2}  b \delta + \mathcal{O}(\delta^2 |b| + |b|^5).
\end{equation}
Here, we have used the following identities
\begin{align*}
\|\psi_*\|^2_{L^2}&=\frac{16}{3} (c_*)^\frac{3}{2},\qquad \|\psi_*\|^4_{L^4}=\frac{2^9}{35} (c_*)^\frac{7}{2}, \quad 6\langle \psi_*^2, u_{c_*} \partial_c u_{c_*} \rangle_{L^2}=\frac{64}{3}(c_*)^\frac{3}{2}\\
 \langle \psi_*^2u_{c_*}, w_0 \rangle_{L^2}&=-\frac{2^{10}}{105} (c_*)^\frac{7}{2} \quad \langle \psi_*, L_{c_*}'  \psi_* \rangle_{L^2}=-16 (c_*)^\frac{3}{2}.\\
\end{align*}

At this stage, we still need to write the expansion of $\delta$ with respect to $b$. To do so, we include (\ref{decomposition-lemma}), (\ref{decomposition-secondary}), (\ref{decomposition-a-c}), (\ref{decomposition-tertiary}), and (\ref{decomposition-last}) into the momentum
$$Q(u) :=\frac{1}{2} \int_{\R \times \T} u^2(x,y)\, dxdy, $$
in order to obtain
\begin{equation}
\label{expansion-Q}
Q(u) = 2 \pi \left[ P(c_* + \delta) + |b|^2 \| \psi_* \|^2_{L^2} + 16(c_*)^\frac{3}{2} |b|^2+ \mathcal{O}(|\delta| |b|^2 + |b|^4) \right].
\end{equation}
We have used the fact that:
$$\langle u_{c_*}, w_0 \rangle_{L^2} + \frac{64}{27} \langle u_{c_*}, \partial_{c} u_{c_*} \rangle_{L^2}= -16 (c_*)^\frac{3}{2}$$
By the momentum conservation, we get
\begin{equation}
\label{expansion-Q-Q}
Q(u_0) = 2 \pi \left[ P(c_*) + \frac{\delta}{\sqrt{c_*}}  + \frac{56}{3} (c_*)^\frac{3}{2}|b|^2
+ \mathcal{O}(\delta^2 + |b|^4) \right],
\end{equation}
Hence, since $c_* = \frac{1}{3}$, we obtain
\begin{equation}
\label{normal-form-1e}
\delta = \delta_0 - \frac{56}{27} |b|^2 + \mathcal{O}(\delta_0^2 + |b|^4),
\end{equation}
where $\delta_0$ is a constant in $t$ determined by the initial data.

Substituting equation (\ref{normal-form-1e})
into the modulation equation (\ref{normal-form-1-equiv}) yields to:
\begin{equation}
\label{normal-form-2}
\dot{b} \langle \eta_*, \psi_* \rangle_{L^2}  =6
 |b|^2 b  \langle \psi_*^2u_{c_*}, w_2 \rangle_{L^2} +16(c_*)^\frac{3}{2} b \delta_0 + \frac{47}{105}(c_*)^\frac{3}{2} |b|^2 b +
\mathcal{O}(\delta_0^2 |b| + |b|^5).
\end{equation}
Defining $\delta_0 := c_+ - c_*$, using the explicit expression
(\ref{limiting-coefficient}) and (\ref{derivative-eigenvalue}),
and truncating (\ref{normal-form-2}) we obtain the normal form
\begin{equation}
\label{normal-form}
\dot{b} = \lambda'(c_*) (c_+ - c_*) b + \gamma |b|^2 b, \quad t \in \mathbb{R}_+,
\end{equation}
with
\begin{equation}
\label{gamma}
\gamma := \frac{1}{8 c_*} \left( \langle \psi_*^2u_{c_*}, w_2 \rangle_{L^2} + \frac{47}{105}(c_*)^\frac{3}{2} \right).
\end{equation}
We recall that equation (\ref{inhom-eq-2}) can be written as $w_2= \frac{3}{2} (L_{c_*} + 4)^{-1} ( u_{c_*} \psi_*^2).$ Thus
$$\langle \psi_*^2u_{c_*}, w_2 \rangle_{L^2} =\frac{3}{2} \langle \psi_*^2u_{c_*},  (L_{c_*} + 4)^{-1} ( u_{c_*} \psi_*^2) \rangle_{L^2}  . $$
Since $L_{c_*} + 4 : H^2(\mathbb{R}) \to L^2(\mathbb{R})$ is strictly positive, we infer that
$$\langle \psi_*^2u_{c_*}, w_2 \rangle_{L^2} >0. $$
Hence, we deduce that $\gamma>0$.

\begin{remark}
Since $\gamma>0$, the zero solution to \eqref{normal-form} is unstable.
\end{remark}

\appendix
\section{The quadratic case.}
For k=1, Pelinovsky used in \cite{Pel} numerical simulations in order to show that $\gamma<0$. We provide a complete proof of that  claim. First, we recall that in the quadratic case $\gamma$ is given by
$$\gamma:=12\langle \psi_*^2, w_0 + w_2 \rangle_{L^2} +
144 \langle \psi_*^2, \partial_c u_{c_*} \rangle_{L^2} -48 \|\psi_*\|^2_{L^2},$$
where $w_{0}$ and $w_{2}$ are solutions to the following linear inhomogeneous equations:
$$
L_{c_{*}} w_{0}=12 \psi_{*}^{2}
$$
and
$$
\left(L_{c_{*}}+4\right) w_{2}=6 \psi_{*}^{2}
$$
with
$$L_c := -\partial^2_{x} +  c - 2 u_c ,\quad \psi_{*}(\xi)=\operatorname{sech}^{3}\left(\sqrt{c_{*}} \xi\right) \quad {\rm and} \quad  c_*= \frac{1}{5}.$$ From \cite{Pel}, we recall that
$$\langle \psi_*^2, w_0 \rangle_{L^2} = \frac{-160}{21\sqrt{c}_*}, \quad   \langle \psi_*^2, \partial_c u_{c_*} \rangle_{L^2} = \frac{4}{5 \sqrt{c}_*}, \quad {\rm and} \quad  \|\psi_*\|^2_{L^2}= \frac{16}{15 \sqrt{c}_*}.$$
On the other hand, we know that
$$w_2= 6(L_{c_*}+4)^{-1}(\psi_*^2).$$
Thus, from the bound $\|(L_{c_*}+4)^{-1}\| \leq 1/3 ,$ we have
$$\langle \psi_*^2, w_2 \rangle_{L^2} \leq 2  \|\psi_*^2\|^2_{L^2}= \frac{1024}{693 \sqrt{c}_*}. $$
Finally, we conclude that
$$\gamma\leq 12\left(-\frac{160}{21\sqrt{c}_*} + \frac{1024}{693 \sqrt{c}_*} \right) +  \frac{576}{5 \sqrt{c}_*} - \frac{256}{5 \sqrt{c}_*}= -12\frac{4256}{693 \sqrt{c}_*} + \frac{64}{ \sqrt{c}_*}=- \frac{6720}{693 \sqrt{c}_*}<0.$$

\bigskip

\noindent Declarations:\\

\noindent{\bf Ethical Approval:} The work is original and has not been submitted elsewhere.\\
{\bf Competing interests:} The authors declare that they have no competing interests.\\
{\bf Author contribution:} The authors declare that the study was realized in collaboration with the same contribution.\\
All authors read and approved the final manuscript.\\
{\bf Funding} is Not applicable.\\
Availability of data and materials: Not applicable.

\newpage
\noindent
Yakine Bahri
\\
Department of Mathematics and Statistics
\\
University of Victoria
\\
3800 Finnerty Road, Victoria, B.C., Canada V8P 5C2
\\
E-mail: ybahri@uvic.ca

\vspace{0.5cm}

\noindent
Hichem Hajaiej
\\
Department of Mathematics
\\
California State University, Los Angeles, USA
\\
5151 State Drive, 5151 Los Angeles, 90331 California, USA
\\
E-mail:hhajaie@calstatela.edu

\end{document}